\newtheorem{theorem}{Theorem}
\newtheorem{lemma}[theorem]{Lemma}
\newtheorem{corollary}[theorem]{Corollary}
\theoremstyle{definition}
\def\picture#1#2#3{\begin{figure}
\begin{center}
\includegraphics{#2}
\caption{#1}
\def\tmp{#3}
\ifx\tmp\empty\else\label{#3}\fi
\end{center}
\end{figure}}
\def\A{{\mathbb A}}
\def\B{{\mathbb B}}
\def\C{{\mathbb C}}
\def\en{{\mathbb N}}
\def\Prob{\operatorname{Prob}}
\def\CSP{\operatorname{CSP}}
\def\Ext{\operatorname{EXT}}
\let\epsilon\varepsilon
\date{\small Mathematics Subject Classification: 68Q25, 97K30, 97K50, 08A70}
\begin{document}

\title{Complexity of the homomorphism extension problem in the random case}
\author{Alexandr Kazda}

\maketitle
\begin{abstract}
We prove that if $\A$ is a large random relational structure (with at least one
relation of arity at least 2) then the homomorphism extension problem $\Ext(\A)$ is almost surely NP-complete.

Key words: homomorphism, constraint satisfaction problem, random digraph
\end{abstract}

\section{Introduction}
The complexity of the constraint satisfaction problem (CSP) with a fixed target
structure is a well established field of study in combinatorics and computer
science (see~\cite{nesetril} for an overview). In the last
decade, we have seen algebraic tools brought to bear on the question of CSP
complexity, yielding major new results (see e.g.~\cite{BJK},
\cite{larossetesson-introduction}, \cite{libor}). 

In the algebraic approach, it is customary to study relational structures that
contain all possible constants. If $\A$ is such a structure and we are to
decide the existence of a homomorphism $f:\B\to \A$ then the constant
constraints prescribe values for $f$ at some vertices of $\B$. We are thus
deciding if some partial homomorphism
$f_c:\B \to \A$ can be extended to the whole $\B$. Therefore, $\CSP(\A)$
becomes the \emph{homomorphism extension problem} with target structure $\A$,
denoted by $\Ext(\A)$. It is easy to see that $\CSP(\A)$ reduces to $\Ext(\A)$,
since in CSP we extend the empty partial homomorphism.

In \cite{nesetril-random}, the authors prove that $\CSP(\A)$ is almost surely
NP-complete for $\A$ large random relational structure with at least one at
least binary relation and without loops. We show by a different method that
the same hardness result holds for $\Ext(\A)$ even if we allow loops.

\section{Preliminaries}
A \emph{relational structure} $\A$ is any set $A$ together with a family of
relations $\{R_i:i\in I\}$ where $R_i\subset A^{n_i}$. We call the number
$n_i$ the \emph{arity} of $R_i$. The sequence $(n_i:i\in I)$ determines the
\emph{similarity type} of $\A$. We consider only finite structures (and
finitary relations) in this paper. We use the notation $[n]=\{1,2,\dots,n\}$. 

Let $\A=(A,\{R_i:i\in I\})$ and $\B=(B,\{S_i:i\in I\})$ be two relational structures of the same
similarity type. A mapping $f: A\to B$ is a homomorphism if for every $i\in I$
and every $(a_1,\dots,a_{n_i})\in R_i$ we have $(f(a_1),\dots,f(a_{n_i}))\in
S_i$. 

Let us fix some $p\in(0,1)$ and let $A$ be a set. The relation $S\subset A^l$
is an \emph{$l$-ary random relation} on $A$ if every possible $l$-tuple belongs to $S$
with probability $p$ (independently of other $l$-tuples). We will call any
relational structure with one or more random relations a \emph{random
relational structure}. In particular, a random relational structure with just
one binary relation is a \emph{random digraph}.

The \emph{Constraint Satisfaction Problem} with the target structure $\A$,
denoted by $\CSP(\A)$, consists of deciding whether a given input relational
structure $\B$ of the same similarity type as $\A$ can be homomorphically
mapped to $\A$. It is easy to come up with examples of $\A$ such that
$\CSP(\A)$ is NP-complete and this is in a sense typical behavior
as proved in \cite{nesetril-random}: If $R(n,k)$ is a $k$-ary random relation on the
set $[n]$ (with $p=1/2$) that does not contain any elements of the form $(a,a,\dots,a)$ for
$a\in A$ then
\begin{align}
\forall k\geq 2,\,\lim_{n\to\infty}\Prob(\CSP([n],R(n,k))\text{ is
NP-complete})=1,\label{nesetril1}\\
\forall n\geq 2, \lim_{k\to\infty}\Prob(\CSP([n],R(n,k))\text{ is
NP-complete})=1.\label{nesetril2}
\end{align}

There is a reason why the authors of \cite{nesetril-random} disallow loops: If
$\A$ has only one relation $R$ and $R$ contains a loop $(a,a,\dots,a)$ then
every $\B$ of the same similarity type as $\A$ can be homomorphically mapped to
$\A$ simply by sending everything to $a$, so $\CSP(A)$ is very simple to solve.

Given a target structure $\A$, the \emph{Homomorphism Extension Problem} for
$\A$, denoted by $\Ext(\A)$, consists of deciding whether a given input structure
$\B$ and a given partial mapping $f:\B\to \A$ can be extended to a
homomorphism from $\B$ to $\A$.

Let $\A$ be a set and $a\in A$. The \emph{constant relation} $c_a$ is the unary
relation consisting only of $a$, i.e. $c_a=\{(a)\}$. When searching for a
homomorphism to $\A$, the relation $c_a$ prescribes a set of elements of $B$
that must be mapped to $a$. A little thought gives us that if $\A$ contains 
constant relations for each of its elements (as is usual in the algebraic
treatment of CSP) then $\CSP(\A)$ and $\Ext(\A)$ are
essentially the same problem.

Since the homomorphism extension problem is quite important to algebraists, it
makes sense to ask what is the typical complexity of $\Ext(\A)$. We will use
the phrase ``$\Ext(\A)$ is almost surely NP-compete for $n$ large'' as an
abbreviation
for ``For each $n\in\en$, there exists a random relational structure $\A_n$ (whose precise definition
is obvious from the context) such that we have 
\[
\lim_{n\to\infty}\Prob(\Ext(\A_n)\text{ is NP-complete})=1.\text{''} 
\]

Because additional relations do not make $\CSP$ easier to solve,
the limit (\ref{nesetril1}) gives us that
that $\Ext(\A)$ is almost surely NP-complete if $\A$ is a
large random relational structure with no loops and at least one relation of arity greater than
one. In the remainder of the paper we show that we can allow loops without
making the problem any easier.

\section{The problem $\Ext$ for random digraphs}
We will begin by investigating random digraphs and then generalize our findings
to all relational structures.

\begin{theorem}\label{thmNPgraph}
Let $G$ be a random digraph on $n$ vertices. Then $\Ext(G)$ is almost surely
NP-complete for $n$ large.
\end{theorem}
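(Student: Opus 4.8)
The plan is to reduce a known NP-complete problem to $\Ext(G)$ by a gadget built inside $G$. Membership of $\Ext(G)$ in NP is immediate, so the task is NP-hardness, and I would reduce from the $3$-colourability of (finite, simple) graphs. The reduction may be non-uniform: for each $n$ it may use the concrete digraph $G=G_n$, and a special configuration inside it, as hard-wired data, so it suffices that the configuration \emph{exists} in $G$, not that it be found efficiently. Call a triple $(S,\,S^{\mathrm{in}}\cup S^{\mathrm{out}},\,R)$ a \emph{colour gadget} in $G$ if $S\subseteq[n]$, the sets $S^{\mathrm{in}},S^{\mathrm{out}}$ partition $S$, the three-element set $R=\{r_1,r_2,r_3\}$ is disjoint from $S$,
\[
\{\, d \in [n] : (s,d)\in E(G)\ \forall s\in S^{\mathrm{in}},\ (d,s)\in E(G)\ \forall s\in S^{\mathrm{out}} \,\} = R,
\]
and $R$ induces a loopless bidirected triangle in $G$ (no loops on the $r_i$; all six arcs $r_ir_j$ with $i\neq j$ present).

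Given such a gadget, the reduction sends a graph $H=(V,E_H)$ to the digraph $\B$ on vertex set $V\cup\{q_s:s\in S\}$ with arcs $(q_s,v)$ for all $v\in V$, $s\in S^{\mathrm{in}}$; arcs $(v,q_s)$ for all $v\in V$, $s\in S^{\mathrm{out}}$; and arcs $(u,v)$ and $(v,u)$ for every $\{u,v\}\in E_H$, together with the partial map $f$ defined only on the $q_s$ by $f(q_s)=s$. If $g$ extends $f$ to a homomorphism $\B\to G$, then the displayed equation forces $g(v)\in R$ for all $v\in V$, and an arc $(u,v)$ of $\B$ then forces $g(u)\neq g(v)$ since $R$ is a loopless triangle, so $g$ restricted to $V$ is a proper $3$-colouring of $H$; conversely a proper $3$-colouring $c$ of $H$ gives the extension $v\mapsto r_{c(v)}$, $q_s\mapsto s$, all of whose constraints hold because $r_{c(v)}\in R$. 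Thus $(\B,f)$ admits an extension iff $H$ is $3$-colourable, and $\B$ has $|V|+O(\log n)$ vertices and is produced in polynomial time, so $\Ext(G)$ is NP-hard as soon as $G$ contains one colour gadget.

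It remains to prove that a random digraph on $[n]$ contains a colour gadget with probability tending to $1$. I would take $k=|S|=\lceil\log_{1/p} n\rceil+\lceil\log_2\log_2 n\rceil$ and bound the expected number $N$ of colour gadgets from below. For a fixed $S$ (of size $k$), a fixed bipartition and a fixed disjoint triple $R$, the four requirements — the three vertices of $R$ realise the prescribed in/out pattern, none of the remaining $n-3$ vertices does, the $r_i$ carry no loop, and $R$ is bidirectionally complete — involve pairwise disjoint sets of potential arcs, so their probabilities multiply, giving roughly $p^{3k}(1-p^{k})^{\,n}(1-p)^{3}p^{6}$. Multiplying by the $\binom{n}{k}2^{k}\binom{n-k}{3}$ choices of the configuration and using $np^{k}\to 0$, one checks that $E[N]\to\infty$, in fact faster than any polynomial in $n$. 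To pass from $E[N]\to\infty$ to $\Prob(N\geq 1)\to1$ I would apply the second moment method: configurations built from disjoint sets of arcs are independent and contribute essentially $E[N]^2$ to $E[N^2]$, while the contribution of pairs of configurations that share arcs is absorbed thanks to the super-polynomial size of $E[N]$.

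The main obstacle is exactly this last variance estimate. The awkward point is the global clause ``no other vertex of $[n]$ realises the pattern'': it links, through the arcs joining $S$ to the rest of the graph, configurations whose vertex sets are not disjoint, so the dependency structure is less transparent than in a purely local subgraph count. It helps that this clause is a conjunction of \emph{disjunctions} rather than of atomic events, which makes the induced correlations weak, and that $E[N]$ is so large that a crude case analysis of how two gadget configurations can overlap already yields $E[N^2]=(1+o(1))E[N]^2$; Chebyshev then finishes the argument. I would emphasise that forcing the variables into a \emph{loopless} triangle is precisely what neutralises the loops $G$ may carry, which is why this method proves the hardness of $\Ext(G)$ in the presence of loops, a case excluded in~\cite{nesetril-random}.
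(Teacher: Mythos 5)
Your reduction is essentially the paper's: hard-wire a tuple of constants whose \emph{exact} common neighbourhood is a three-element set inducing a loopless bidirected triangle, pin those constants by the partial map, and attach them to every vertex of $H$; the paper uses out-neighbourhoods only, which changes nothing, and your insistence on a bidirected loopless triangle is the right condition (the paper's factor $1-p^3$ is evidently meant to be $(1-p)^3$). Where you genuinely diverge is the existence argument. The paper avoids any variance computation: it splits $[n]$ into $A$ and $B$, uses pairwise disjoint blocks of roughly $\log_{1/p}n$ vertices of $A$ to generate candidate three-element subalgebras inside $B$ (the events for different blocks concern disjoint sets of arcs, hence are independent, so a Bernoulli-trials argument yields $k$ disjoint candidates with probability tending to $1$), and then uses the independence of the arcs inside $B$ from the defining arcs leaving $A$ to find a triangle among them with probability $1-(1-p^6(1-p)^3)^k$. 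You instead run a first- and second-moment argument on the number $N$ of gadgets; your first-moment step is fine ($np^k\to0$ and $E[N]\to\infty$ super-polynomially with your $k$). The one step you leave unproven is precisely the variance estimate, which you flag yourself; it does go through, but not really ``because $E[N]$ is super-polynomial''. What makes it work is: (i) exactness forces $R$ to be a function of $(S^{\mathrm{in}},S^{\mathrm{out}})$, so pairs with the same pinned set and partition but different triples contribute zero; (ii) for two configurations sharing no positive arc (in particular all vertex-disjoint ones, which still interact through arcs between $S_1$ and $S_2$ in the exactness clauses), the joint probability is at most the product of the positive parts, and dividing by the two exactness probabilities $(1-p^k)^{n-3}=1-o(1)$ costs only a $1+o(1)$ factor; (iii) every shared positive arc has one endpoint in $R$ and one in $R'$, so the probability gain from sharing is at most $p^{-O(k)}=n^{O(1)}$, and a gain of that order requires an overlap of $\Theta(k)$ vertices, which costs $n^{-\Theta(k)}$ in the enumeration; summing over overlap patterns gives $E[N^2]\le(1+o(1))E[N]^2$ and Chebyshev finishes. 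So your route is correct and self-contained, but it buys NP-hardness at the price of a fiddly overlap case analysis that still has to be written out in full, whereas the paper's greedy, block-disjoint construction gets the same conclusion from nothing more than independence of disjoint arc sets and finitely many Bernoulli trials. (A cosmetic point: the gadget has size $O(\log n)$, which is a constant with respect to the input $H$, so polynomiality of your reduction is automatic.)
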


\begin{proof}

Let $G=(V,E)$ be a digraph. Understand $G$ as a relational structure
and add to $G$ every constant relation possible. Let $v_1,\dots,v_l\in V(G)$. Consider the set 
\[
F_{v_1,\dots,v_l}=\{u\in V(G): \forall i, (v_i,u)\in E(G)\}
\]
We will call this set a \emph{subalgebra} of $G$. 

For an interested reader, we note that sets $F_{v_1,\dots,v_l}$ are 
subalgebras in the universal algebraic sense and our technique can be greatly
generalized to all primitive positive definitions (see \cite{BJK}). For
our proof, however, we need a lot less: Assume that for some choice of
$v_1,\dots, v_l$ the subalgebra $F_{v_1,\dots,v_l}$ induces a loopless triangle
in $G$.
We claim that we can then reduce graph 3-colorability
to $\Ext(G)$, making $\Ext(G)$ NP-complete. 


Let $H$ be a graph whose 3-colorability we wish to test. We then understand $H$
as a symmetric digraph and add to $H$ new vertices $w_1,\dots,w_l$ and new
edges $(w_i,u)$ for each $i\in \{1,\dots,n\}$ and all $u\in V(H)$, obtaining
the digraph $H'$. 
Our $\Ext(G)$ instance will then
consist of the digraph $H'$ along with the partial map $f$ which maps each
$w_i$ to $v_i$. Now $f$ can be extended to a homomorphism if and only if $H$
can be homomorphically mapped into the triangle induced by $F_{v_1,\dots,v_l}$
which happens if and only if $H$ is 3-colorable.


All we need to do now is to show that $G$ almost surely contains a
subalgebra that induces a triangle. Our aim, roughly speaking, is to show that $G$ almost surely
contains many three element subalgebras because then there is a large chance
that at least one of these subalgebras will be a triangle.

We will partition $V(G)$ into two
sets $A=\{1,\dots,\lfloor n/2\rfloor\}$ and $B=\{\lceil n/2
\rceil,\dots,n\}$. 
We will now use points of $A$ to define subalgebras lying in $B$.
Denote by $S_k$ the event ``$G$ contains at least $k$ disjoint three-element
subalgebras of the form $F_{v_1,\dots,v_l}\subset B$ for some $v_1,\dots,v_l\in
A$.''  We
can write 
$$S_k=\bigcup_{\substack{C_1,\dots,C_k\subset B\\
\forall i\neq j,\, C_i\cap C_j=\emptyset\\
\forall i,\, |C_i|=3}} S_{C_1,\dots,C_k},
$$
where $S_{C_1,\dots,C_k}$ is the event ``The sets
$C_1,\dots,C_k$ are subalgebras of $G$''. Finally,
denote by $T_{C_1,\dots,C_k}$ the event ``There exists an $i\in\{1,2.\dots,k\}$
such that the set $C_i$ induces a triangle subgraph of $G$.''

Since a probability that a fixed $C_i$ induces a triangle is $p^6(1-p^3)$, 
the probability of the event $T_{C_1,\dots,C_k}$ is (for $C_1,\dots,C_k$
pairwise disjoint three element sets) 
$$
\Prob(T_{C_1,\dots,C_k})=1-(1-p^6(1-p^3))^k,
$$
which tends to 1 when $k$ goes to infinity.

Observe that the event $S_{C_1,\dots,C_k}$ is independent from the event
$T_{C_1,\dots,C_k}$ for each choice of $C_1,\dots,C_k\subset B$ 
since both events talk about disjoint sets of edges of $G$. 

Assume for a moment that for all $k\in \en$ the value of $\Prob(S_k)$ tends to 1
as $n$ tends to infinity. Then, given an $\epsilon>0$, we choose $k$ so that
$\Prob(T_{C_1,\dots,C_k})\geq 1-\epsilon$. When $n$ is large enough, 
the digraph $G$ contains some $k$ pairwise disjoint
three element subalgebras $C_1,\dots,C_k$
with probability at least $1-\epsilon$. The probability that one of the sets
$C_1,\dots,C_k$ them induces a triangle is $T_{C_1,\dots,C_k}\geq
1-\epsilon$. Thus we get an NP-complete CSP problem with probability at least $(1-\epsilon)^2>
1-2\epsilon$ and since $\epsilon$ was arbitrary, we see that for large $n$ the
homomorphism extension problem is almost surely NP-complete.

It remains to show $\lim_{n\to\infty}\Prob(S_k)=1$ for all $k$. Fix the value
of $k$. For each value of $n$, let $l$ be the integer satisfying $n p^l\geq 1>n p^{l+1}$. We will now search
for the three element subalgebras of $B$ for $n$ large. We proceed in steps: Assume that after $i$
steps we have already found $m$ such subalgebras $C_1,\dots,C_m$. In the $(i+1)$-th step, 
we take the vertices $1+il,2+il,\dots,l+il$
of $A$ and consider the subalgebra $F_{1+il,2+il,\dots,l+il}$. If this
subalgebra lies in $B$, has size three and is disjoint with all the sets $C_1,\dots,C_m$, we
let $C_{m+1}=F_{1+il,2+il,\dots,l+il}$, increase $m$ by one and continue with
the next step. Otherwise, $F_{1+il,2+il,\dots,l+il}$ is not a good candidate for
$C_{m+1}$, so we leave $m$ unchanged and continue with the next step.

What is the probability that we find the $(m+1)$-th
subalgebra in a given step? Every vertex of $G$ is in $F_{1+il,2+il,\dots,l+il}$ with the
probability $p^l$. The probability that $F_{1+il,2+il,\dots,l+il}$ consists of
three yet-unused vertices of $B$ is then equal to
\[
q={|B|-3\cdot m \choose 3}p^{3l}(1-p^l)^{n-3}\geq
\frac{(n/2-3m-3)^3}{6}p^{3l}(1-p^l)^n
\]
If $m\geq k$, we have already won, so assume $m<k$:
\[
q\geq \frac{(n/2-3k)^3}{6}p^{3l}(1-p^l)^n=\frac{(1/2-3k/n)^3}{6}
n^3p^{3l}(1-p^l)^n
\]
Now let $r=\frac{(1/2-3k/n)^3}6$ and observe that $r>0$ for $n$ large
enough. Using the the inequalities $n p^l\geq 1>n p^{l+1}$ we have: 
\[
q \geq r n^3 p^{3l}(1-p^l)^n \geq r (1-p^l)^n> r \left(1-\frac{1}{pn}\right)^n.
\]
The lower
bound on $q$ tends to $r/e^{1/p}$ as $n$ tends to infinity, so there exists a $\delta$ such that $q>\delta>0$
for all $n$ large enough.

Therefore, the probability of producing a new three-element subalgebra in a
given step is at
least $\delta>0$ and this bound does not depend on the number of subalgebras
we have already found. Now observe that $l$
is approximately $\log_{1/p} n$ and therefore we have enough vertices in $A$ for
approximately $s=\frac{n}{2\log_{1/p}n}$ steps. If we choose $n$ large enough, we can have $s$ as
large as we want and so the probability of finding at least $k$ subalgebras
can be arbitrarily close to 1. Therefore, $\lim_{n\to\infty} \Prob(S_k)=1$, concluding the
proof.
\end{proof}

\section{Random relational structures}
It is easy to see that if $\A$ is a relational structure with unary
relations only then $\Ext(\A)$ is always polynomial. We would now like to
investigate the case of relations of arity greater than two. Intuition tells us
that greater arity means greater complexity. The intuition is right.

\begin{lemma}
Let $l\geq 2$, $n$ be large and let $\A=([n],S)$ be a relational structure with
$S$ a random $l$-ary relation. Then
the homomorphism extension problem $\CSP(\A)$ is almost surely NP-complete.
\end{lemma}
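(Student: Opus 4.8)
The plan is to reduce the digraph case (Theorem~\ref{thmNPgraph}) to the general $l$-ary case by finding, inside the random $l$-ary structure $\A$, a \emph{primitive positive construction} of a random digraph, and then invoke the argument we already have. Concretely, I would first explain that a single $l$-ary relation $S$ lets us define binary relations by substituting constants into all but two coordinates: for a tuple $\bar v = (v_1,\dots,v_{l-2})$ of fixed elements, put
\[
E_{\bar v} = \{(x,y)\in [n]^2 : (x,y,v_1,\dots,v_{l-2})\in S\}.
\]
Since each $l$-tuple enters $S$ independently with probability $p$, the relation $E_{\bar v}$ is itself a random digraph on $[n]$ (with the same parameter $p$), and edge-sets $E_{\bar v}$ for distinct $\bar v$ are independent of each other. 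So $\A$ ``contains'' arbitrarily many independent random digraphs once $n$ is large.

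Next I would redo the key combinatorial step of Theorem~\ref{thmNPgraph} in this setting. Recall that the proof there only needed one structural event: that the digraph almost surely contains a three-element ``subalgebra'' $F_{v_1,\dots,v_l}$ inducing a loopless triangle, whence $3$-colorability reduces to $\Ext$. For the $l$-ary structure I would consider subalgebras of the form
\[
F = \{u\in[n] : \forall i,\ (v_i^{(1)},\dots,v_i^{(l-1)},u)\in S\},
\]
i.e.\ fix the first $l-1$ coordinates in several ways and intersect. Each such $F$ is again obtained by a random $p^{(\text{number of defining tuples})}$ membership event per vertex, exactly as before, and a triangle inside $F$ (under the symmetric binary relation cut out of $S$ by pinning $l-2$ coordinates) is again an independent low-probability event. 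The disjointness/greedy-construction bookkeeping from the proof of Theorem~\ref{thmNPgraph} goes through essentially verbatim: we again get $q>\delta>0$ for the probability of producing a new good three-element subalgebra in each of $\approx n/(2\log_{1/p}n)$ steps, so $\Prob(S_k)\to 1$ for every $k$, and then an independent triangle event upgrades this to NP-completeness with probability $\to 1$.

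The only genuinely new bit of care is matching the \emph{arities}: when we pin coordinates of $S$ to constants we must make sure those pinnings are available in the input structure $\B$ (they are, since $\Ext$ lets us add fresh vertices $w$ and force $f(w)=v$ via the partial map, just as in the digraph proof), and we must make sure that the symmetric binary ``colour graph'' we extract and the triangle we look for live in coordinates consistent with how $H'$ is encoded. Since the problem statement says $\CSP(\A)$, note that the theorem as phrased follows immediately: $\CSP(\A)$ reduces to $\Ext(\A)$ only trivially, but here we actually reduce $3$-colorability directly to $\Ext(\A)$ and hence, because extra (constant) relations never make $\CSP$ easier and the algebraic setup identifies $\CSP$ with constants and $\Ext$, to $\CSP$ of the structure with constants added; one then observes adding constants doesn't change that a random $l$-ary relation is hard. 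I expect the main obstacle to be purely bookkeeping: choosing how many coordinates to pin versus leave free so that the extracted gadget is simultaneously (i) a bona fide random digraph on which Theorem~\ref{thmNPgraph} applies and (ii) definable using only constant constraints legitimately expressible in an $\Ext(\A)$ instance. Once the coordinate bookkeeping is fixed, no new probabilistic estimate is needed beyond the one already carried out.
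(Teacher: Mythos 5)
Your proposal is correct in outline and rests on the same central idea as the paper -- pin all but two coordinates of $S$ to constants (enforced in an $\Ext$ instance by auxiliary vertices whose images are prescribed by the partial map), thereby extracting a random digraph from the random $l$-ary relation -- but you execute it differently. The paper treats Theorem~\ref{thmNPgraph} as a black box: it sets $R=\{(x,y):(x,y,1,\dots,1)\in S\}$, observes that $\B=([n],R)$ is itself a random digraph, so $\Ext(\B)$ is almost surely NP-complete, and then gives a one-gadget polynomial reduction of $\Ext(\B)$ to $\Ext(\A)$: add a single new element $e$ to the input, replace each binary tuple $(x,y)$ of the instance by $(x,y,e,\dots,e)$, and set $g(e)=1$. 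You instead propose to re-run the whole subalgebra-plus-triangle probabilistic argument inside the $l$-ary structure. That can be made to work (the defining tuples have their first coordinates in $A$ while the triangle tuples have their first two coordinates in $B$, so the independence and disjointness bookkeeping does go through), but it duplicates estimates that the black-box reduction makes unnecessary: once you have noted that the pinned binary relation is a random digraph, no further probabilistic verification is needed, only the instance transformation. The reduction-between-problems route is therefore shorter and less error-prone than redoing the greedy construction with $(l-1)$-tuples of pins. Finally, your closing worry about $\CSP$ versus $\Ext$ is moot: the statement's ``$\CSP(\A)$'' is simply the paper's (mis)wording for the homomorphism extension problem, and the proof in the paper indeed establishes NP-completeness of $\Ext(\A)$, exactly what your argument delivers as well.
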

\begin{proof}
We have proven the result for $l=2$. If $l>2$, consider the binary relational structure $\B=([n],R)$ where 
$R=\{(x,y)\in [n]^2: (x,y,1,1,\dots,1)\in S\}$. It is easy to see that if $S$ is
a random $l$-ary relation then $\B$ is a random 
digraph where each edge exists with the probability $p$. From
Theorem~\ref{thmNPgraph} we see that
$\Ext(\B)$ is almost surely NP-complete. We will now show how to reduce
$\Ext(\B)$ to $\Ext(\A)$ in polynomial time, proving that $\Ext(\A)$
is almost surely NP-complete.

Using algebraic tools, the reduction of $\Ext(\B)$ to $\Ext(\A)$ follows from the fact that
$R$ is defined by a primitive positive formula that uses only $S$ and the constant $1$.
However, we will provide an elementary reduction here: Let $\C=(C,T)$ be a
relational structure with a single binary relation $T$ and let $f:C\to [n]$ be a 
partial mapping. We add to $C$ a new element $e$, construct the
$l$-ary relation $U=\{(x,y,e,e,\dots,e):(x,y)\in T\}$ and the partial mapping
$g:C\cup \{e\}\to [n]$ so that $g_{|C}=f$ and $g(e)=1$. A little thought gives
us that $g$ can be extended to a homomorphism $(C\cup\{e\},U)\to \A$ if and
only if $f$ can be extended to a homomorphism $(C,T)\to \B$, concluding the
proof.
\end{proof}

Additional relations in $\A$ do not make $\Ext(\A)$ easier, so we have the most
general version of our NP-completeness result:

\begin{corollary}
Let $\A$ be the relational structure $([n],\{R_i:i\in I\})$ where at least one
$R_i$ is a random
relation of arity greater than one. Then $\Ext(\A)$ is 
almost surely NP-complete for $n$ large.
\end{corollary}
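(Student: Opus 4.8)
The plan is to derive the corollary from the preceding lemma via the standard observation that enlarging a target structure with extra relations cannot make $\Ext$ easier. First I would note that $\Ext(\A)$ always lies in NP: a homomorphism extending the prescribed partial map is a certificate of size polynomial in the input, and verifying that a map respects each $R_i$ takes polynomial time. So the whole content is to show that $\Ext(\A)$ is almost surely NP-hard.

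Fix an index $i\in I$ for which $R_i$ is a random relation of arity greater than one, and consider the single-relation structure $\A_i=([n],R_i)$. By the lemma above, $\Ext(\A_i)$ is almost surely NP-complete for $n$ large, hence almost surely NP-hard. Next I would give an elementary polynomial-time reduction from $\Ext(\A_i)$ to $\Ext(\A)$. Given an instance of $\Ext(\A_i)$, that is, a structure $\B_i=(B,S)$ of the similarity type of $\A_i$ together with a partial map $f\colon B\to[n]$, build an instance of $\Ext(\A)$ on the \emph{same} underlying set $B$ by placing the relation $S$ in the coordinate $i$ and the empty relation in every other coordinate $j\in I$, while keeping the same partial map $f$. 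Since empty relations impose no constraints, a map $B\to[n]$ is a homomorphism into $\A$ precisely when it is a homomorphism into $\A_i$; thus $f$ extends to a homomorphism into $\A$ if and only if it extends to one into $\A_i$. This transformation is clearly computable in polynomial time, so whenever the random relation $R_i$ is such that $\Ext(\A_i)$ is NP-complete, $\Ext(\A)$ is NP-hard, and being in NP it is NP-complete.

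Consequently $\Prob(\Ext(\A)\text{ is NP-complete})\geq\Prob(\Ext(\A_i)\text{ is NP-complete})$, and since the right-hand side tends to $1$ as $n\to\infty$ by the lemma, so does the left-hand side; this is exactly the assertion that $\Ext(\A)$ is almost surely NP-complete for $n$ large.

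There is no real obstacle here, as the corollary is essentially bookkeeping on top of Theorem~\ref{thmNPgraph} and the lemma. The only point deserving a line of care is that the padding reduction must output a legitimate input of the same similarity type as $\A$ — which is why the unused coordinates are filled with empty relations rather than omitted — and that the probabilistic conclusion transfers because the event that makes $\Ext(\A)$ hard contains the event that makes $\Ext(\A_i)$ hard.
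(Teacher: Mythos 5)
Your proposal is correct and matches the paper's approach: the paper proves this corollary by the same observation that additional relations do not make $\Ext$ harder to reduce to, citing the preceding lemma, and simply leaves the padding reduction implicit. Your explicit reduction with empty relations in the unused coordinates and the monotonicity of the probabilities is exactly the bookkeeping the paper omits.
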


As a final note, we will now prove the analogue of the limit~(\ref{nesetril2})
for $\Ext$.

\begin{corollary}
Let us fix a set $A$ of at least two elements and let $\A=(A,R)$ be 
a relational structure with $R$
random $k$-ary relation. Then $\Ext(\A)$ is almost surely NP-complete for $k$
large.
\end{corollary}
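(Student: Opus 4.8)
The plan is to reuse the ``search for a hard gadget inside $R$'' idea of Theorem~\ref{thmNPgraph}, but in the regime where the domain $A$ is fixed and the arity $k$ grows. Write $n=|A|\ge 2$ and fix a two-element subset $D=\{0,1\}\subseteq A$. Given the $k$-ary random relation $R$, a tuple $\bar t$ of elements of $A$, and a number $j$ of coordinates to leave free, call the relation $\{(u_1,\dots,u_j):(u_1,\dots,u_j,\bar t)\in R\}$ a \emph{slice} of $R$. The strategy has two ingredients: first, show that with probability tending to $1$ as $k\to\infty$ certain prescribed small slices occur in $R$; second, use those slices together with the constants that come for free in $\Ext$ to reduce a classical NP-complete problem — I would use positive $1$-in-$3$ SAT — to $\Ext(\A)$ in polynomial time.

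I would look for two slices. A \emph{restriction slice} is a tuple $\bar c\in A^{k-1}$ with $\{u\in A:(u,\bar c)\in R\}=D$; for a fixed $\bar c$ this has probability $p^{2}(1-p)^{\,n-2}>0$, and as $\bar c$ ranges over the $n^{k-1}$ elements of $A^{k-1}$ these events are mutually independent, since the underlying $n$-element sets of $k$-tuples are pairwise disjoint. Hence such a $\bar c$ exists with probability $1-\bigl(1-p^{2}(1-p)^{\,n-2}\bigr)^{n^{k-1}}$, which tends to $1$. A \emph{clause slice} is a tuple $\bar d\in A^{k-3}$ such that, among the eight tuples $(x,y,z,\bar d)$ with $x,y,z\in D$, those lying in $R$ are exactly $(1,0,0,\bar d)$, $(0,1,0,\bar d)$, $(0,0,1,\bar d)$; for fixed $\bar d$ this has probability $p^{3}(1-p)^{5}>0$, the $n^{k-3}$ events over $\bar d$ are again independent, and so a suitable $\bar d$ exists with probability tending to $1$. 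By a union bound on the complements, with probability tending to $1$ both slices are present. (When $n=2$ the restriction slice is automatic since $D=A$, and it does no harm to keep it.)

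Now suppose $\A$ carries a restriction slice $\bar c$ and a clause slice $\bar d$. Given an instance of positive $1$-in-$3$ SAT with variables $x_{1},\dots,x_{N}$ and clauses $C_{1},\dots,C_{M}$, each $C_{\ell}$ an unordered triple of variables, I build the $\Ext(\A)$ instance $(\B,f)$ whose universe is $\{x_{1},\dots,x_{N}\}$ together with a fresh vertex $y_{a}$ for each $a\in A$, with $f$ the partial map $y_{a}\mapsto a$ (the $x_i$ left unassigned), and whose single $k$-ary relation consists of the tuples $(x_{i},y_{c_{2}},\dots,y_{c_{k}})$ for every variable $x_{i}$ and the tuples $(x_{i},x_{j},x_{m},y_{d_{4}},\dots,y_{d_{k}})$ for every clause $C_{\ell}=\{x_{i},x_{j},x_{m}\}$. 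Any homomorphism $g\colon\B\to\A$ extending $f$ has $g(y_{a})=a$; the variable tuples then force $g(x_{i})\in D$ for every $i$, and, given this, the clause tuples assert exactly that one of $g(x_{i}),g(x_{j}),g(x_{m})$ equals $1$ in each clause. Thus $g$ exists if and only if the $1$-in-$3$ instance is satisfiable, and the construction is polynomial in the instance size for fixed $\A$; hence $\Ext(\A)$ is NP-complete.

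Finally I would conclude just as in Theorem~\ref{thmNPgraph}: for $\epsilon>0$ pick $k$ large enough that both slices are present with probability at least $1-\epsilon$, so $\Ext(\A)$ is NP-complete with probability at least $1-\epsilon$; since $\epsilon$ is arbitrary, $\lim_{k\to\infty}\Prob(\Ext(\A)\text{ is NP-complete})=1$. Unlike in Theorem~\ref{thmNPgraph}, there is no real ``enough room'' difficulty here, because the number of candidate slices is exponential in $k$; so I expect the only delicate points to be routine bookkeeping — verifying that the slice events are genuinely independent (disjointness of the underlying tuple families) and that the reduction remains correct in the boundary case $n=2$, where the domain-restriction step is vacuous and one simply reduces from positive $1$-in-$3$ SAT directly.
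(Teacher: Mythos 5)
Your proposal is correct, but it takes a genuinely different route from the paper. The paper folds the $k$-ary relation in half: for $k=2m$ it forms $\B=(A^m,S)$ with $S=\{((a_1,\dots,a_m),(a_{m+1},\dots,a_{2m})):(a_1,\dots,a_{2m})\in R\}$, observes that $S$ is a binary random relation on the domain $A^m$, which grows as $k\to\infty$, invokes Theorem~\ref{thmNPgraph} to get that $\Ext(\B)$ is almost surely NP-complete, and transfers hardness to $\Ext(\A)$ by a vertex-splitting reduction (each input element $c$ becomes $(c,1),\dots,(c,m)$); odd $k$ is handled by fixing a padding element $e$ in the last coordinate. You instead stay on the fixed domain and find, with probability tending to $1$, two ``slices'' of $R$: one pp-defining (with constants) the two-element subdomain $D$, the other carving out exactly the Boolean $1$-in-$3$ relation on $D$, and then reduce positive $1$-in-$3$ SAT, which is NP-complete by Schaefer's theorem. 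Your probability estimates are sound: for fixed $\bar c$ (resp.\ $\bar d$) the slice event has constant positive probability, the events over distinct suffixes are independent because they concern disjoint families of $k$-tuples, there are exponentially many candidates, and the union bound you use to combine the two slice types does not require independence between them; the boundary case $|A|=2$ is correctly noted as one where the domain-restriction gadget is unnecessary (or one simply asks for the full slice, which still occurs a.a.s.). What each approach buys: the paper's argument is very short because it recycles Theorem~\ref{thmNPgraph} and the $3$-colorability gadget, at the cost of the even/odd case split and of leaning on the more delicate ``enough disjoint subalgebras'' estimate hidden in that theorem; your argument is self-contained modulo the NP-completeness of positive $1$-in-$3$ SAT, needs no parity distinction, and its probabilistic part is essentially trivial since the number of candidate slices is exponential in $k$ and the relevant events are independent. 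Both establish membership in NP implicitly (guess and check), as does the paper.
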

\begin{proof}
Assume first that $k$ is even and let $m=k/2$.  

Consider the relational structure $\B=(A^m,S)$ with 
$$
S=\{((a_1,\dots,a_m),(a_{m+1},\dots,a_{2m})):(a_1,\dots,a_{2m})\in R\}.
$$
It is straightforward to prove that $S$ is a binary random relation on $A^m$ and therefore $\Ext(\B)$
is almost surely NP-complete for large even $k$. What is more, $\Ext(\B)$ can
be easily reduced to $\Ext(\A)$: If $\C=(C,T)$ is a relational
structure with $T$ binary and $f:C\to A^m$ is a partial mapping, 
we construct the structure $\C'=(C',T')$ with 
\begin{align*}
C'&=\{(c,i):c\in C, i\in \{1,\dots,m\}\},\\
T'&=\{((c,1),\dots,(c,m),(d,1),\dots,(d,m)):(c,d)\in T\}
\end{align*}
and a partial mapping $g:C'\to A$ such that $g(c,i)=a_i$ whenever $f(c)$ is defined and equal to
$(a_1,\dots,a_m)$.

It is easy to see that $g$ can be extended to a homomorphism from $\C'$ to $\A$
if and only if $f$ can be extended to a homomorphism from $\C$ to $\A$. 

In the case that $k=2m+1$, we fix an $e\in A$, choose
$\B=(A^m,S)$ with
$$
S=\{((a_1,\dots,a_m),(a_{m+1},\dots,a_{2m}):(a_{1},\dots,a_{2m},e)\in R\}
$$
and proceed similarly to the previous case. 

We see that for a large enough $k$, no matter if it is odd or even, the problem
$\Ext(\B)$ is almost surely NP-complete.
\end{proof}

\section{Conclusions}
We have shown that the homomorphism extension problem is almost surely
NP-complete for large relational structures (assuming we have at least one
non-unary relation). In a sense, our result is not surprising since the relational
structures we consider are very dense, so it stands to a reason that we can 
find hard instances most of the time.

It might therefore be interesting to see what is the complexity of CSP or EXT
for large structures obtained by other random processes, particularly when
relations are sparse. Such structures might better correspond to ``typical''
cases of CSP or EXT encountered in practice. Some such results already exist;
see \cite{nesetril-luczak-projective} for a criterion on the random graph
process to almost surely produce projective graphs (if $G$ is
projective then $\Ext(G)$ is NP-complete, see \cite{BJK}). Our guess is that
both CSP and EXT will remain to be almost surely NP-complete in all the
nontrivial cases.

\section{Acknowledgments}
The research was supported by the GA\v CR project GA\v CR 201/09/H012, by the Charles
University project GA UK 67410 and by the grant SVV-2011-263317. The 
author would like to thank Libor Barto for suggesting this problem.

\bibliographystyle{plain}
\bibliography{citations}

\end{document}